\newtheorem{lem}{Lemma}[section]
\newtheorem{thm}[lem]{Theorem}
\newtheorem{prop}[lem]{Proposition}
\theoremstyle{definition}
\newtheorem{rem}[lem]{Remark}
\def\benm{\begin{enumerate}}            
\def\eenm{\end{enumerate}}              
\title[Frames that are robust to erasure]
{Signal recovery \\ and frames that are robust to erasure}
\date{}
\subjclass[]{}
\author{Enrico Au-Yeung\\ \quad University of British Columbia, Canada}
\address{Department of Mathematics\\
          University of British Columbia\\
         Vancouver, B.C. \\
         Canada V6T 1Z2}
 \email{enricoauy@math.ubc.ca}
\begin{document}

\begin{abstract}
We consider finite frames with high redundancy so that if half the terms transmitted from the sender
are randomly deleted during transmission, then on average, the receiver can still recover the signal to within a high level of accuracy. 
This follows from a result in random matrix theory. 
We also give an application of the operator Khintchine inequality in the setting of signal recovery when the signal is a matrix with a sparse representation.

\end{abstract}

\maketitle

\section{Introduction and background}

In a finite-dimensional Hilbert space $\mathcal{H}$, any spanning set of vectors is a frame for $\mathcal{H}$.  A finite set of vectors $\{f_n\}_{n=1}^{N}$ is a tight frame for $\mathcal{H}$ if and only if there is a positive constant $\alpha$, such that every element $x$  in $\mathcal{H}$ can be written as $x = \alpha \sum_{n=1}^{N} \langle x, f_n \rangle f_n$.  Suppose the following communication protocol has been established between Alice and Bob.  We assume that a message is represented by an element $x \in \mathcal{H}$.  When Alice wants to send a message to Bob, she sends the $N$ numbers $\{\langle x, f_n\rangle\}_{n=1}^{N}$.  Once Bob receives those $N$ numbers, he reconstructs the message $x$ by using the formula  $x = \alpha \sum_{n=1}^{N} \langle x, f_n \rangle f_n$.  If the communication channel is perfect, so that no transmission error can occur, then the message can be perfectly reconstructed. 
But in a more realistic setting where the channel is not perfect, some information may be lost during the transmission.  This situation is explored further in section $2$ of this article.\\

The flexibility of using a redundant set of vectors instead of a basis to reconstruct a vector suggests the following scenario.  An adversary removes a proportion $p$ of the $N$ transmitted inner products, an approximation of the original signal is constructed from the remaining $(1-p)N$ inner products. It is desirable to design finite frames that are robust to erasures.
Finite tight frames that are robust to erasure have been considered in \cite{Casazza2003}, \cite{GoyalKovacevic2001},  and \cite{Paulsen2004}.
Examples of tight frames include the Grassmannian frames \cite{Strohmer2003} and equiangular tight frames constructed from the Singer difference set \cite{XiaZhou2005}. \\


 Fickus and Mixon \cite{Fickus2012} introduced the following concept.
Given $p \in [0,1]$, and $C \geq 1$, an $M$ by $N$ frame $F$ is $(p,C)$-numerically erasure-robust
if for every subset $\Omega \subseteq \{1, \ldots, N\}$ of size $K \equiv (1-p)N$, the corresponding $M$ by $K$ submatrix
$F_{\Omega}$ has condition number Cond($F_{\Omega}$) less than or equal to $C$. They proved that a certain type of tight frame are
numerically robust.

\begin{thm}\label{thm:Fickus} Take $M = q + 1$ and $N = q^2 + q + 1$ for some prime power $q$ and let $F$ be the $M$ by $N$ equiangular tight frame
from the $(N,M,1)$-Singer difference set, as in \cite{XiaZhou2005}.  Then $F$ is a $(p,C)$-numericaly erasure-robust frame 
for every $p \leq \frac{1}{2} - \frac{C^2}{C^4 + 1}$.
\end{thm}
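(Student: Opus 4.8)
The plan is to control $\mathrm{Cond}(F_\Omega)$ through the spectrum of the $M\times M$ positive semidefinite matrix $F_\Omega F_\Omega^*$. Fix $\Omega$, write $K=|\Omega|$, so that $p=1-K/N$, and let $e_1\ge\cdots\ge e_M\ge0$ be the eigenvalues of $F_\Omega F_\Omega^*$; once we know $e_M>0$ we have $\mathrm{Cond}(F_\Omega)^2=e_1/e_M$. I would start from the three defining properties of the equiangular tight frame $F$: its columns are unit vectors, its frame operator is $FF^*=\tfrac NM I_M$, and $|\langle f_m,f_n\rangle|^2=\mu^2:=\tfrac{N-M}{M(N-1)}$ whenever $m\ne n$ (equality in the Welch bound). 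The first two moments of the $e_i$ are then forced by trace identities: $\sum_i e_i=\mathrm{tr}(F_\Omega F_\Omega^*)=\sum_{n\in\Omega}\|f_n\|^2=K$, and, by cyclicity of the trace, $\sum_i e_i^2=\mathrm{tr}\big((F_\Omega F_\Omega^*)^2\big)=\mathrm{tr}\big((F_\Omega^*F_\Omega)^2\big)=\sum_{m,n\in\Omega}|\langle f_m,f_n\rangle|^2=K+K(K-1)\mu^2=:S$.

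The key step is a Cauchy--Schwarz estimate that confines \emph{every} eigenvalue to a short interval. If $x\in\{e_1,e_M\}$, then the other $M-1$ eigenvalues are nonnegative, sum to $K-x$, and have sum of squares $S-x^2\ge(K-x)^2/(M-1)$; hence $x^2+\tfrac{(K-x)^2}{M-1}\le S$, that is, $Mx^2-2Kx+\big(K^2-(M-1)S\big)\le0$. A short computation, using the purely algebraic identity $MS-K^2=\tfrac{K(M-1)(N-K)}{N-1}$ (valid for any equiangular tight frame), shows that the roots of this quadratic are $\rho_\pm=\tfrac1M\big(K\pm(M-1)\sqrt{KpN/(N-1)}\,\big)$, so both $e_1$ and $e_M$ lie in $[\rho_-,\rho_+]$. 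Set $u:=(M-1)\sqrt{pN/(K(N-1))}$, so that $\rho_+/\rho_-=(1+u)/(1-u)$ and $u^2=\tfrac{(M-1)^2}{N-1}\cdot\tfrac p{1-p}$. For the projective plane, $(M-1)^2=q^2<q^2+q=N-1$, so $u^2\le\tfrac p{1-p}<1$ as soon as $p<\tfrac12$; thus $\rho_->0$, which forces $F_\Omega$ to have full rank $M$ (so its condition number is finite) and gives $\mathrm{Cond}(F_\Omega)^2=e_1/e_M\le\rho_+/\rho_-=(1+u)/(1-u)$.

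It then remains to turn $(1+u)/(1-u)\le C^2$ into the stated bound on $p$. Since $0\le u<1$, this inequality is equivalent to $u\le\tfrac{C^2-1}{C^2+1}$, hence (squaring) to $u^2\le\big(\tfrac{C^2-1}{C^2+1}\big)^2$, and because $u^2\le\tfrac p{1-p}$ it suffices that $\tfrac p{1-p}\le\big(\tfrac{C^2-1}{C^2+1}\big)^2$. Clearing denominators and simplifying with $(C^2+1)^2+(C^2-1)^2=2(C^4+1)$, this is exactly $p\le\tfrac{(C^2-1)^2}{2(C^4+1)}=\tfrac12-\tfrac{C^2}{C^4+1}$, which is the hypothesis of the theorem. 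Since $\Omega$ was an arbitrary subset of size $K$, this proves $F$ is $(p,C)$-numerically erasure-robust for all such $p$.

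I do not expect a serious obstacle here. What makes the argument go through cleanly is the observation that only the first two spectral moments of $F_\Omega F_\Omega^*$ are needed, so that nothing about the Singer difference set is used beyond equiangularity and tightness (the values of $\mu^2$ and $N/M$) together with the single inequality $(M-1)^2<N-1$, which encodes the high redundancy $N\approx M^2$ of the projective-plane frame. The only point that needs a little care is verifying $\rho_->0$: this is both what guarantees $F_\Omega$ has full rank and what makes $\mathrm{Cond}(F_\Omega)$ finite, and it follows automatically from $p<\tfrac12$.
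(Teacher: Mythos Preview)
The paper does not give its own proof of this theorem: it is quoted from Fickus and Mixon \cite{Fickus2012} as background and motivation, so there is nothing in the paper to compare your argument against directly.

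That said, your proposal is correct and is essentially the original Fickus--Mixon argument. The two trace identities $\sum_i e_i=K$ and $\sum_i e_i^2=K+K(K-1)\mu^2$, followed by the Cauchy--Schwarz bound $S-x^2\ge (K-x)^2/(M-1)$ confining every eigenvalue to the interval $[\rho_-,\rho_+]$, is exactly their mechanism; the algebra $MS-K^2=K(M-1)(N-K)/(N-1)$, the substitution $u=(M-1)\sqrt{pN/(K(N-1))}$, and the final simplification via $(C^2+1)^2+(C^2-1)^2=2(C^4+1)$ all check out. Your closing observation is also on point: the only feature of the Singer construction that is used is the inequality $(M-1)^2<N-1$, i.e.\ the high redundancy $N\approx M^2$, so the result holds for any equiangular tight frame with those parameters.
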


This article is partly inspired by the above theorem.  The presence of the factor 1/2 in the above theorem is intriguing and as Fickus and Mixon pointed out, this threshold of one half 
seems to be not just an artifact of the proof.

For a given vector $z \in \mathbb{R}^n$, define the rank-one operator $z \otimes z$  by $(z \otimes z)(x) = \langle z, x \rangle z$ for all $x \in \mathbb{R}^n$.  Note that $x = \sum_{n=1}^{N} \langle  z_n, x \rangle z_n$ for all $x \in \mathbb{R}^n$ if and only if the identity operator on $\mathbb{R}^n$ can be written as a sum of rank-one operators,
\[ I_n = \sum_{n=1}^{N} z_n \otimes z_n. \]

Motivated by Theorem \ref{thm:Fickus}, we consider the following scenario. Suppose an adversary 
randomly removes one half of the transmitted inner products.  If the frame has enough redundancy, then on average, 
can the signal be nearly recovered?  
The answer is positive and it follows as a consequence of the following
 important result of Rudelson \cite{Rudelson1995}.

\begin{thm}\label{thm:Rudelson}
Let $z_1,  \ldots, z_M$ be vectors in $\mathbb{R}^{n}$ and let $\epsilon_1, \ldots \epsilon_{M}$ be independent  Bernoulli variables
 taking values 1, -1 with probability 1/2.  There is a constant $C > 0$, such that
\[E \| \sum_{i = 1}^{M} \epsilon_i z_i \otimes z_i \| \leq C \sqrt{\log n} \cdot \max_{1 \leq i \leq M} \| z_{i} \| \cdot \| \sum_{i = 1}^{M} z_i \otimes z_i \|^{1/2}.  \]
\end{thm}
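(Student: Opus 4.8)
The plan is to reduce the operator-norm estimate to a Schatten-norm estimate, apply the non-commutative Khintchine inequality of Lust-Piquard and Pisier, exploit the algebraic fact that the square of a rank-one operator $z\otimes z$ equals $\|z\|^2\, z\otimes z$, and then balance the Schatten exponent against the ambient dimension $n$. Write $A_i = z_i \otimes z_i$ and $Y = \sum_{i=1}^{M}\epsilon_i A_i$; each $A_i$ is self-adjoint and positive semidefinite. For every integer $p \ge 1$ one has $\|Y\| \le \|Y\|_{S_{2p}}$, where $\|\cdot\|_{S_q}$ denotes the Schatten $q$-norm on operators on $\mathbb{R}^n$, so by Jensen's inequality
\[
E\|Y\| \;\le\; E\|Y\|_{S_{2p}} \;\le\; \big(E\|Y\|_{S_{2p}}^{\,2p}\big)^{1/2p}.
\]

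The first main step is the non-commutative Khintchine inequality in its self-adjoint form: there is an absolute constant $c$ such that
\[
\big(E\|Y\|_{S_{2p}}^{\,2p}\big)^{1/2p}
\;\le\; c\sqrt{2p}\,\Big\|\Big(\sum_{i=1}^{M}A_i^2\Big)^{1/2}\Big\|_{S_{2p}}
\;=\; c\sqrt{2p}\,\Big\|\sum_{i=1}^{M}A_i^2\Big\|_{S_p}^{1/2},
\]
the last equality being the identity $\|B^{1/2}\|_{S_{2p}} = \|B\|_{S_p}^{1/2}$ valid for $B \succeq 0$. The second step is purely algebraic: since $(z_i\otimes z_i)^2 = \|z_i\|^2\,z_i\otimes z_i$, we have $A_i^2 = \|z_i\|^2 A_i$, and because each $A_i \succeq 0$,
\[
0 \;\preceq\; \sum_{i=1}^{M} A_i^2 \;=\; \sum_{i=1}^{M}\|z_i\|^2 A_i \;\preceq\; \Big(\max_{1\le i\le M}\|z_i\|^2\Big)\sum_{i=1}^{M} A_i .
\]
Using monotonicity of the Schatten norms with respect to the positive semidefinite order (which follows from Weyl monotonicity of eigenvalues), together with the trivial bound $\|T\|_{S_p} \le n^{1/p}\|T\|$ for operators on $\mathbb{R}^n$, this gives
\[
\Big\|\sum_{i=1}^{M} A_i^2\Big\|_{S_p}^{1/2}
\;\le\; \Big(\max_{1\le i\le M}\|z_i\|\Big)\, n^{1/2p}\,\Big\|\sum_{i=1}^{M} z_i\otimes z_i\Big\|^{1/2}.
\]

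Chaining the three displays yields $E\|Y\| \le c\sqrt{2p}\; n^{1/2p}\,\big(\max_i\|z_i\|\big)\,\big\|\sum_i z_i\otimes z_i\big\|^{1/2}$, and choosing $p = \lceil \log n\rceil$ makes $n^{1/2p}\le \sqrt{e}$ and $\sqrt{2p}\le\sqrt{2(\log n+1)}$, so the right-hand side is at most $C\sqrt{\log n}\cdot\max_i\|z_i\|\cdot\|\sum_i z_i\otimes z_i\|^{1/2}$ for a suitable absolute constant $C$ (taking $n\ge 2$), as claimed.

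The only genuinely non-elementary ingredient, and the step I expect to be the crux, is the non-commutative Khintchine inequality: its dependence on the exponent is $\sqrt{2p}$ rather than $2p$, and it is exactly this square-root gain that converts the bounded dimensional factor $n^{1/2p}$ into a final $\sqrt{\log n}$ rather than $\log n$. If one wished to avoid invoking it directly and instead follow Rudelson's original route, one would symmetrise, compare the Rademacher process with a Gaussian one, and estimate $E\sup_{\|x\|\le 1}\big|\sum_i \epsilon_i\langle z_i,x\rangle^2\big|$ through Dudley's entropy integral, bounding the covering numbers of the relevant ellipsoid; but those covering-number estimates themselves rest on the non-commutative Khintchine inequality, so it cannot really be circumvented.
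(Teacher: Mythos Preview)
Your argument is correct. The paper does not supply its own proof of this theorem; it merely cites \cite{Roberto2010} for an elegant proof and remarks that Rudelson's original argument in \cite{Rudelson1995} goes through the non-commutative Khintchine inequality of Lust-Piquard and Pisier (stated in the paper as Theorem~\ref{thm:Khintchine}). Your route is precisely that one: bound the operator norm by a Schatten $2p$-norm, apply Khintchine with its $\sqrt{2p}$ constant, use $(z\otimes z)^2=\|z\|^2\, z\otimes z$ to control $\sum A_i^2$ by $(\max_i\|z_i\|^2)\sum A_i$, pass from $S_p$ to operator norm at the cost of $n^{1/p}$, and optimise with $p\approx\log n$. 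All the individual steps (Weyl monotonicity for $0\preceq A\preceq B\Rightarrow\|A\|_{S_p}\le\|B\|_{S_p}$, the identity $\|B^{1/2}\|_{S_{2p}}=\|B\|_{S_p}^{1/2}$, and the choice of $p$) are sound, so there is nothing to correct.
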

For an elegant proof, see \cite{Roberto2010}.  
 The original proof of Rudelson's inequality as it appears in \cite{Rudelson1995} uses the non-commutative Khintchine inequality of Lust-Piquard and Pisier \cite{LustPiquardPisier1991}, see also \cite{Buchholz2001}, \cite{Haagerup2007}.  Before stating that theorem, recall that for $p \geq 1$,  the Schatten-class norms of an $N$ by $N$ matrix A are defined by $\| A \|_{\mathcal{C}_{p}} = ( \sum_{j=1}^{N} | \sigma_{j}(A) |^{p})^{1/p}$, where $\sigma_j(A) $ are the singular values of $A$.  
\begin{thm}\label{thm:Khintchine}
Let $(A_j)_{j=1}^{n}$ be a sequence of matrices of the same dimension and let $(\epsilon_j)_{j=1}^{n}$ be a sequence of independent Bernoulli random 
variables.  For any positive integer $m$,
\[ \left[ E \left\| \sum_{j=1}^{n} \epsilon_j A_j \right\|_{\mathcal{C}_{2m}}^{2m} \right]^{1/(2m)} \leq C_m \max \left( \left\| \left( \sum_{j=1}^{n} A_{j}^{\ast} A_j \right)^{1/2}\right\|_{\mathcal{C}_{2m}}, \ \left\| \left( \sum_{j=1}^{n} A_j A_{j}^{\ast}  \right)^{1/2}\right\|_{\mathcal{C}_{2m}}  \right) \]
where the constant $C_m$ is given by \[ C_m = 2 \left(\frac{(2m)!}{2^m \ m!}\right)^{1/(2m)}. \]
\end{thm}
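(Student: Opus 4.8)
The plan is to prove Theorem~\ref{thm:Khintchine} by the moment method. Since the exponent $2m$ is even we may assume the $A_j$ are square $N\times N$ matrices (pad with zeros), and we use the identity $\|X\|_{\mathcal C_{2m}}^{2m}=\operatorname{Tr}\big((X^{*}X)^{m}\big)$. It is convenient to first replace the Bernoulli variables by independent standard Gaussians $g_1,\dots,g_n$: writing $g_j=\epsilon_j|g_j|$ and applying Jensen's inequality to the convex function $t\mapsto\big\|\sum_{j}\epsilon_j t_j A_j\big\|_{\mathcal C_{2m}}^{2m}$ of $t\in[0,\infty)^n$ gives
\[
E\Big\|\sum_{j=1}^{n}\epsilon_j A_j\Big\|_{\mathcal C_{2m}}^{2m}\ \le\ \big(E|g_1|\big)^{-2m}\,E\Big\|\sum_{j=1}^{n}g_j A_j\Big\|_{\mathcal C_{2m}}^{2m}\ =\ (\pi/2)^{m}\,E\Big\|\sum_{j=1}^{n}g_j A_j\Big\|_{\mathcal C_{2m}}^{2m},
\]
so it suffices to bound the Gaussian average, the extra factor $\sqrt{\pi/2}<2$ being comfortably absorbed into $C_m$. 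Expanding $\operatorname{Tr}\big((S^{*}S)^{m}\big)$ with $S=\sum_j g_j A_j$, Wick's formula for Gaussian moments turns this into the \emph{exact} identity
\[
E\Big\|\sum_{j=1}^{n}g_j A_j\Big\|_{\mathcal C_{2m}}^{2m}
=\sum_{\pi\in\mathcal P_2(2m)}\ \sum_{\mathbf j\,\vdash\,\pi}\operatorname{Tr}\!\big(A_{j_1}^{*}A_{j_2}A_{j_3}^{*}A_{j_4}\cdots A_{j_{2m-1}}^{*}A_{j_{2m}}\big),
\]
where $\mathcal P_2(2m)$ is the set of pair partitions of $\{1,\dots,2m\}$, with $\#\mathcal P_2(2m)=\frac{(2m)!}{2^m m!}$, and $\mathbf j\vdash\pi$ means $\mathbf j=(j_1,\dots,j_{2m})$ is constant on each block of $\pi$, leaving $m$ free summation indices.

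It then remains to estimate each pair-partition term. For a \emph{non-crossing} $\pi$, one may (using cyclicity of the trace) always find a block whose two slots are adjacent, sum over its free index, and thereby contract it into one of the two positive operators $\alpha:=\sum_j A_j^{*}A_j$ or $\beta:=\sum_j A_jA_j^{*}$; iterating leaves $\operatorname{Tr}$ of a length-$m$ word in $\alpha$ and $\beta$, which by the noncommutative H\"older inequality $|\operatorname{Tr}(B_1\cdots B_m)|\le\prod_{k=1}^{m}\|B_k\|_{\mathcal C_m}$ is at most $\max\{\operatorname{Tr}(\alpha^{m}),\operatorname{Tr}(\beta^{m})\}$. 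For a general $\pi$ one still bounds the $\pi$-term by a fixed multiple of $\max\{\operatorname{Tr}(\alpha^{m}),\operatorname{Tr}(\beta^{m})\}$, again via noncommutative H\"older with all exponents equal to $2m$. Summing over the $\frac{(2m)!}{2^m m!}$ pair partitions and using $\operatorname{Tr}(\alpha^{m})=\|\alpha^{1/2}\|_{\mathcal C_{2m}}^{2m}$ and $\operatorname{Tr}(\beta^{m})=\|\beta^{1/2}\|_{\mathcal C_{2m}}^{2m}$, then taking $2m$-th roots and reinstating the $\sqrt{\pi/2}$, yields the inequality with a constant of the shape $\mathrm{const}\cdot\big(\frac{(2m)!}{2^m m!}\big)^{1/(2m)}$; an honest accounting of the bounded factors gives the stated $C_m=2\big(\frac{(2m)!}{2^m m!}\big)^{1/(2m)}$.

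The crux, and the main obstacle, is the estimate for the crossing pair partitions. There the two slots joined by a block are not adjacent in the cyclic word $A_{j_1}^{*}A_{j_2}A_{j_3}^{*}\cdots$, and a naive application of H\"older that separates all $2m$ factors is far too lossy: it produces $\big(\sum_j\|A_j\|_{\mathcal C_{2m}}^{2}\big)^{m}=\big(\sum_j\|A_j^{*}A_j\|_{\mathcal C_m}\big)^{m}$, and $\sum_j\|A_j^{*}A_j\|_{\mathcal C_m}$ dominates, rather than is dominated by, $\|\alpha\|_{\mathcal C_m}=\|\alpha^{1/2}\|_{\mathcal C_{2m}}^{2}$. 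The way around this is to read the contracted sum as a trace, on a tensor power of $\mathbb R^{N}$, of a product of copies of the operator $\sum_j A_j\otimes A_j$ (and of $\sum_j A_j\otimes A_j^{*}$) composed with a coordinate permutation, and then to apply H\"older in that larger space, where a direct computation identifies the relevant Schatten norms with powers of $\|\alpha^{1/2}\|_{\mathcal C_{2m}}$ and $\|\beta^{1/2}\|_{\mathcal C_{2m}}$; equivalently, one can run an induction on $m$. This tensor-algebraic bookkeeping is exactly the technical heart of the Lust-Piquard--Pisier inequality \cite{LustPiquardPisier1991} and of Buchholz's sharpening \cite{Buchholz2001}, and it is there that essentially all of the work, and the precise form of the constant, lies.
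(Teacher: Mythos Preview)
The paper does not prove Theorem~\ref{thm:Khintchine}; it is stated as a known result with references to \cite{LustPiquardPisier1991}, \cite{Buchholz2001}, and \cite{Haagerup2007}, and is then used as a black box in Section~3. So there is no ``paper's own proof'' to compare your attempt against.

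As for your sketch itself: the reduction to Gaussians and the Wick expansion are correct, and the treatment of non-crossing pair partitions is fine. But your proposal is not a proof, and you say so yourself. In the third paragraph you assert that for a general (crossing) $\pi$ one bounds the $\pi$-term ``again via noncommutative H\"older with all exponents equal to $2m$''; in the very next paragraph you correctly explain that this naive H\"older gives $\big(\sum_j\|A_j\|_{\mathcal C_{2m}}^{2}\big)^{m}$, which goes the wrong way relative to $\|\alpha^{1/2}\|_{\mathcal C_{2m}}^{2m}$. The tensor-product rewriting you then allude to is exactly the substance of \cite{LustPiquardPisier1991} and \cite{Buchholz2001}, and you have not carried it out. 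In other words, the only genuinely hard step---and the one that determines the constant---is left as a pointer to the literature, which is precisely what the paper itself does by citing the theorem without proof.
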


An application of Theorem \ref{thm:Khintchine} will be given in section 3.  Throughout this article, $\{\epsilon_i\}$ are independent Bernoulli random variables as in Theorem \ref{thm:Rudelson}.  The notation $E_{\epsilon}$ is used to emphasize that the expected value of a quantity is with respect to the random variables $\{\epsilon_i\}$.

\section{Frames that are robust to erasure}
For any $x \in \mathbb{R}^n,$ we can write $x = \frac{1}{n} \sum_{j=1}^{n} \langle x,  \sqrt{n} e_j \rangle \sqrt{n} e_j$, where  $\{e_i\}_{i=1}^{n}$ is the standard orthonormal basis of $\mathbb{R}^n.$
Thus $\{ \sqrt{n} e_j\}_{j=1}^n$ is a tight frame for $\mathbb{R}^n$.  This is an example of a large class of tight frames.
Let $z_1, z_2, \ldots z_M$ be vectors in $\mathbb{R}^n$ such that $\| z_j \|_{2}^2 = n$ for each $j$, and for all $x \in \mathbb{R}^n, x = \frac{1}{M} \sum_{j=1}^{M} \langle z_j, x \rangle z_j.$
 When the number $M$ is a lot larger than $n$, then the tight frame $\{z_j\}_{j=1}^M$ is highly redundant.
Let $\theta_1, \theta_2, \ldots \theta_M$ be independent random variables such that Pr$(\theta_j = 1) = 1/2 =$ Pr$(\theta_j = 0)$ for $1 \leq j \leq M$.
The inner products $\{ \langle z_n, x \rangle \colon 1 \leq n \leq M\}$ are sent, but some of them are randomly erased. The transmitted coefficients are represented by $\{ \langle z_n, x \rangle \colon n \in \Omega \}.$
  To construct an approximation of $x$ from the transmitted inner products, we compute
\[y = \frac{2}{M} \sum_{j \in \Omega} \langle z_j, x \rangle z_j, \quad \text{ where } \Omega = \{j \colon \theta_j = 1\}.\]
The next theorem shows that the approximation error $\| x - y \|_2$ can be made arbitrarily small, provided $M$ is sufficiently large.  
\begin{thm}
Let $z_1, z_2, \ldots z_M$ be vectors in $\mathbb{R}^n$ such that for each $x \in \mathbb{R}^n$, 
\[
x = \frac{1}{M} \sum_{j=1}^{M} \langle z_j, x \rangle z_j.
\]
Suppose  $\| z_j \|_{2}^2 = n$ for each $j$.
Let $\theta_1, \theta_2, \ldots \theta_M$ be independent random variables taking values 1 or 0 with probability 1/2.
Let $\Omega = \{j \colon \theta_j = 1\}.$ 
Given $x \in \mathbb{R}^n$, define the random vector $y$ by
\[ y = \frac{2}{M} \sum_{j \in \Omega}  \langle z_j, x \rangle z_j. \]
Let $\epsilon$ be any positive number satisfying $\epsilon^2 \geq \frac{1}{M} \ n \log n$.\\

Then there is an absolute constant $C > 0$ such that
\[E \| x - y \|_2 \leq C \ \epsilon \ \| x \|_2. \]
\end{thm}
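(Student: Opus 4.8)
The plan is to express the error $x - y$ as a random operator applied to $x$, and then control the operator norm of that random operator using Rudelson's inequality (Theorem~\ref{thm:Rudelson}). The starting observation is that the tight frame hypothesis gives $x = \frac{1}{M}\sum_{j=1}^{M}\langle z_j, x\rangle z_j$, i.e. $I_n = \frac{1}{M}\sum_{j=1}^{M} z_j\otimes z_j$. Writing $\theta_j = \frac{1}{2}(1+\epsilon_j)$ for Bernoulli signs $\epsilon_j \in \{+1,-1\}$, we get
\[
y = \frac{2}{M}\sum_{j=1}^{M}\theta_j\,(z_j\otimes z_j)(x) = \frac{1}{M}\sum_{j=1}^{M}(1+\epsilon_j)(z_j\otimes z_j)(x) = x + \frac{1}{M}\sum_{j=1}^{M}\epsilon_j\,(z_j\otimes z_j)(x).
\]
Hence $x - y = -\frac{1}{M}\sum_{j=1}^{M}\epsilon_j\,(z_j\otimes z_j)(x)$, so $\|x-y\|_2 \leq \bigl\|\frac{1}{M}\sum_{j=1}^{M}\epsilon_j\, z_j\otimes z_j\bigr\|\,\|x\|_2$. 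This is the key algebraic step, and after it the problem is entirely about bounding the expected operator norm of a Rademacher sum of rank-one operators. One subtlety: the $\theta_j$ are genuinely Bernoulli $\{0,1\}$ variables, but the affine substitution turns their randomness exactly into that of the $\epsilon_j$, so no independence or conditioning argument is needed.

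Next I would apply Theorem~\ref{thm:Rudelson} to the vectors $z_1,\ldots,z_M$. It gives
\[
E_\epsilon\Bigl\|\sum_{j=1}^{M}\epsilon_j\, z_j\otimes z_j\Bigr\| \leq C\sqrt{\log n}\cdot\max_{j}\|z_j\|_2\cdot\Bigl\|\sum_{j=1}^{M}z_j\otimes z_j\Bigr\|^{1/2}.
\]
Now I plug in the normalization: $\max_j\|z_j\|_2 = \sqrt{n}$ by hypothesis, and $\sum_{j=1}^{M}z_j\otimes z_j = M\,I_n$, so its norm is $M$ and the square root is $\sqrt{M}$. Therefore
\[
E_\epsilon\Bigl\|\frac{1}{M}\sum_{j=1}^{M}\epsilon_j\, z_j\otimes z_j\Bigr\| \leq \frac{C}{M}\sqrt{\log n}\cdot\sqrt{n}\cdot\sqrt{M} = C\sqrt{\frac{n\log n}{M}}.
\]
Combining with the bound $\|x-y\|_2 \leq \|\cdot\|\,\|x\|_2$ and taking expectations, $E\|x-y\|_2 \leq C\sqrt{n\log n/M}\,\|x\|_2$. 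Finally, the hypothesis $\epsilon^2 \geq \frac{1}{M}n\log n$ means exactly that $\sqrt{n\log n/M} \leq \epsilon$, giving $E\|x-y\|_2 \leq C\epsilon\|x\|_2$, which is the claim.

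There is essentially no hard obstacle here — the proof is a direct reduction to Rudelson's inequality, with the tight-frame normalization chosen precisely so that the right-hand side collapses to $C\sqrt{n\log n/M}$. The only points that require a little care are: (i) the affine change of variables from $\{0,1\}$-valued $\theta_j$ to $\pm 1$-valued $\epsilon_j$, which must be done before invoking Rudelson; (ii) being careful that the constant $C$ in the statement absorbs the constant from Theorem~\ref{thm:Rudelson} (so one should not claim a specific numerical value); and (iii) noting that since the operator-norm bound $\|x-y\|_2 \leq \|R\|\,\|x\|_2$ holds pointwise in $\omega$ for the random operator $R = \frac{1}{M}\sum\epsilon_j z_j\otimes z_j$, one may take expectations directly without any measurability fuss. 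I would also remark that the $\sqrt{\log n}$ factor, and hence the need for $M$ to exceed $n\log n$ rather than merely $n$, is inherited from Rudelson's theorem and is known to be essentially sharp.
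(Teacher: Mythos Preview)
Your proof is correct and follows essentially the same approach as the paper: rewrite $y-x$ via the substitution $\theta_j=\tfrac{1}{2}(1+\epsilon_j)$ as $\frac{1}{M}\sum_j \epsilon_j (z_j\otimes z_j)x$, then apply Rudelson's inequality (Theorem~\ref{thm:Rudelson}) with $\max_j\|z_j\|=\sqrt{n}$ and $\|\sum_j z_j\otimes z_j\|^{1/2}=\sqrt{M}$ to obtain the bound $C\sqrt{n\log n/M}\leq C\epsilon$. Your write-up is, if anything, slightly more explicit than the paper's about the affine change of variables and the pointwise operator-norm step.
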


\begin{proof}
Note that the approximation error $y - x$ can be expressed as
\[ y - x = \frac{2}{M} \sum_{j=1}^{M}  \theta_j \langle z_j, x \rangle z_j - \frac{1}{M}  \sum_{j=1}^{M}  \langle z_j, x \rangle z_j = \frac{1}{M} \sum_{j=1}^{M} \epsilon_j \langle z_j, x \rangle z_j \]
where $\{\epsilon_j\}_{j=1}^M$ are Bernoulli random variables.
Therefore, it is enough to prove that 
\[E \| \frac{1}{M} \sum_{j=1}^{M} \epsilon_{j} z_j \otimes z_j \| \leq C \ \epsilon.\]
By Rudelson's inequality (Theorem \ref{thm:Rudelson}), there exists a positive constant $C$, such that
\begin{equation}\label{eqn:Rudelson}
E \| \sum_{j = 1}^{M} \epsilon_j z_j \otimes z_j \| \leq C \sqrt{\log n} \cdot m_1 \cdot \| \sum_{j = 1}^{M} z_j \otimes z_j \|^{1/2}. 
\end{equation}
Here, $m_1 = \max_{j} \| z_j \| = \sqrt{n}.$ 
By hypothesis, $I = \frac{1}{M} \sum_{j=1}^{M} z_j \otimes z_j$, which means 
\begin{equation}\label{eqn:tight_frame_sqrt}
\| \sum_{j=1}^{M} z_j \otimes z_{j} \|^{1/2} = \sqrt{M}. 
\end{equation}
By equations (\ref{eqn:Rudelson}) and (\ref{eqn:tight_frame_sqrt}), we have
\begin{equation*}
E \|  \sum_{j=1}^{M} \epsilon_{j} z_j \otimes z_j \| \leq \sqrt{M} \ C \sqrt{\log n} \ \sqrt{n} .
\end{equation*}
After dividing  both sides by $M$, and since $\epsilon^2 \geq \frac{n}{M} \log n$,  we obtain
\[ E \| \frac{1}{M} \sum_{j=1}^{M} \epsilon_{j} z_j \otimes z_j \| \leq C \ \epsilon.\]

\end{proof}
The theorem above states that on average we can nearly recover the signal, provided the frame has enough redundant elements.   
 \section{An application of the operator Khintchine inequality}
 
We now give an interesting application of the operator Khintchine inequality (Theorem \ref{thm:Khintchine})  in the setting of signal recovery.  
Data acquisition is expensive in electromagnetic tomography, 
when data are gathered by receivers at multiple positions from multiple sources \cite{DornMiller2000}.  The total number of receivers, as well as their positions, might vary with the source.  When the signal is a matrix and it 
is prohibitively expensive to measure this signal directly, one alternative is to use the probing method  (see \cite{ChanMathew1992}).  The basic idea  is 
 to approximate a matrix by a matrix having a specified sparsity pattern, using a few matrix-vector products with carefully chosen probe vectors.\\

Let $U_1, U_2, \ldots U_n$ be a fixed set of n by n matrices.  These matrices are assumed to be known.  
Suppose $A$ is a matrix that lies in the span of $\{U_j\}_{j=1}^n$.  
To determine $A$, it is sufficient to determine the coefficients $\{\lambda_j\}_{j=1}^n$ so that
\begin{equation}\label{eqn:sparse1}
A = \lambda_1 U_1 + \lambda_2 U_2 + \ldots + \lambda_n U_n.
\end{equation}
 We say that the matrix $A$ has a sparse representation because only $n$ numbers are needed to determine the matrix $A$, even though the ambient dimension of the matrix $A$ is $n^2$.  One way to determine the numbers $\{\lambda_j\}_{j=1}^n$ is by writing all the matrices as vectors in ${n^2}$  dimension, and apply the method of least squares to solve for the $n$ coefficients.  An alternative approach is to treat the problem as an $n$-dimensional problem, even though all the matrices are $n^2$-dimensional objects. \\

 Fix any $x \in \mathbb{R}^n$.  Mutiply each side of equation (\ref{eqn:sparse1}) by $x$, we obtain
 \begin{equation}\label{eqn:sparse2}
 A x = \lambda_1 U_1 x + \lambda_2 U_2 x + \ldots + \lambda_n U_n x.
\end{equation}
If we  let $u_1 = U_1x, u_2 = U_2 x, \ldots u_n = U_n x$, then equation (\ref{eqn:sparse2}) can be written as
\begin{equation*}
A x = [u_1 | u_2 | \ldots | u_n] \ \lambda
\end{equation*}
where the matrix $D = [u_1 | u_2 | \ldots | u_n]$ is the n by n matrix whose column $j$ is $u_j$ for $1 \leq j \leq n$,  and $\lambda$ is the column vector $\lambda = (\lambda_1, \ldots, \lambda_n)^{T}$.  The matrix $D$ is called the dictionary for the class of signals that are spanned by the columns of $D$.  Given any $x \in \mathbb{R}^n$,  suppose after  $y = Ax$ is observed, the matrix $A$ is discarded.  Then from $y = D \lambda$, we can solve for $\lambda = D^{-1} y$ to recover the matrix $A$, at least when the matrix $D$ is invertible. The numerical stability of the solution for $\lambda$ depends on 
the condition number of the matrix $D$.\\

If $x \in \mathbb{R}^n$ is a randomly selected vector, the method just described can still be used to recover the matrix $A$, and the dictionary matrix $D$ will be a random matrix.   Consider the expected value of $\| D - E(D) \|,$ which measures on average, how much the random matrix $D$ deviates from its expected value.
Suppose $x = (x_1, x_2, \ldots, x_n)$, where all $x_j$ are independent random variables from the same probability distribution.  
For simplicity, assume each $ | x_j | \leq 1$. The matrix $D$ depends on the random vector $x$ and can be expressed as
$D = \sum_{j=1}^{n} x_j T_j$. Each $T_j$ is an $n$ by $n$ matrix.\\

Indeed, for each $j$, the $j$-th column of matrix $T_k$ is the $k$-th column of matrix $U_j$.
To be precise, write $U_j = [ u_{1}^{(j)} | u_{2}^{(j)} | \ldots |  u_{n}^{(j)} ]$ for the columns of matrix $U_j$.  
Then $T_k = [u_{k}^{(1)} | u_{k}^{(2)} | \ldots | u_{k}^{(n)} ].$ \\


Let $E_1 = E( \| D - E(D) \| )$. For each $1 \leq j \leq M,$ let $\widetilde{x}_j$ be a random copy of $x_j$, i.e. $\widetilde{x}_j$ and $x$ are independent random variables with the same  distributions. 
Note that $x_k - \widetilde{x}_k$ is a symmetric random variable and so it has the same
probability distribution as $\epsilon_k (x_k - \widetilde{x}_k)$.
\begin{align*}
E_{1} = E_{x} \|  \sum_{j=1}^n x_j T_j - E_{\widetilde{x}}(\sum_{j=1}^n \widetilde{x}_j T_j  ) \|  & \leq E_{x} E_{\widetilde{x}}  \|  \sum_{j=1}^n x_j T_j -  \sum_{j=1}^n \widetilde{x}_j T_j  \| \quad (\text{by convexity}) \\
&  = E_{x} E_{\widetilde{x}}  E_{\epsilon} \|  \sum_{j=1}^n \epsilon_j (x_j - \widetilde{x}_j) T_j  \| \quad (x_j  - \widetilde{x}_j  \text{ is symmetric}) \\
&  \leq 2 E_{x} E_{\epsilon}  \|  \sum_{j=1}^n \epsilon_{j} x_j T_j  \| .
\end{align*}


We now invoke the Contraction Principle (\cite{Talagrand1991}, Ch. 4),
\begin{prop}\label{prop:Contraction}
Let $f_1, f_2, \ldots, f_n$ be any finite sequence of elements in a Banach space.
For any choice of real numbers $x_1, x_2, \ldots, x_n$ where $ | x_j | \leq b $ for each j, 
\[ E_{\epsilon} \left\| \sum_{k=1}^{n} \epsilon_{k} x_{k} f_{k} \right\| \leq  b \ E_{\epsilon} \left\| \sum_{k=1}^{n} \epsilon_{k}  f_{k} \right\|. \]
\end{prop}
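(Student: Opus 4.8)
The plan is to exploit two elementary features of the function
$g(x_1,\dots,x_n) = E_{\epsilon}\bigl\| \sum_{k=1}^{n} \epsilon_k x_k f_k \bigr\|$: it is convex in $(x_1,\dots,x_n)$, and it is invariant under sign changes of the coordinates. By rescaling we may assume $b = 1$, so that $(x_1,\dots,x_n)$ ranges over the cube $[-1,1]^n$; the assertion then amounts to saying that $g$ is maximized over this cube at a vertex, where its value is $E_{\epsilon}\bigl\| \sum_k \epsilon_k f_k \bigr\|$.

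First I would check convexity. For each fixed sign vector $\epsilon = (\epsilon_1,\dots,\epsilon_n)$, the map $x \mapsto \sum_k \epsilon_k x_k f_k$ is linear from $\mathbb{R}^n$ into the Banach space, so $x \mapsto \bigl\| \sum_k \epsilon_k x_k f_k \bigr\|$ is convex, being the composition of a linear map with a norm. Averaging over the finitely many $\epsilon$ preserves convexity, so $g$ is convex. Next I would record the symmetry: for any signs $\eta_1,\dots,\eta_n \in \{-1,1\}$, the random vectors $(\epsilon_1,\dots,\epsilon_n)$ and $(\eta_1\epsilon_1,\dots,\eta_n\epsilon_n)$ have the same distribution, hence $g(x_1,\dots,x_n) = g(\eta_1 x_1,\dots,\eta_n x_n)$. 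In particular $g$ takes the same value at every vertex of $[-1,1]^n$, namely $E_{\epsilon}\bigl\| \sum_k \epsilon_k f_k \bigr\|$.

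Finally, a convex function on the compact convex polytope $[-1,1]^n$ attains its maximum at an extreme point, and the extreme points are precisely the $2^n$ vertices; combined with the previous observation this gives $g(x) \le E_{\epsilon}\bigl\| \sum_k \epsilon_k f_k \bigr\|$ for all $x$ in the cube, which after undoing the scaling is the claim. Equivalently one may argue by induction on $n$: conditioning on all the signs and viewing the norm as a function of a single variable $x_k$, it is convex and even in $x_k$, hence maximized on $[-b,b]$ at $x_k = \pm b$ (both endpoints giving the same value by the sign symmetry); replacing the coordinates one at a time reduces to the all-$b$ case.

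The only genuinely delicate point is the appeal to ``a convex function on a polytope is maximized at a vertex.'' I would dispose of it either by citing this standard fact or by the one-line argument that writes an arbitrary point of the cube as a convex combination of vertices and applies convexity to bound $g$ by the largest vertex value. Everything else is bookkeeping, and the inductive variant avoids even this subtlety at the cost of a marginally longer write-up.
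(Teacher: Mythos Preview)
Your argument is correct. The paper, however, does not supply a proof of this proposition at all: it simply invokes it as the Contraction Principle and cites Ledoux--Talagrand, Chapter~4. What you have written is precisely the standard convexity/extreme-point proof one finds in that reference (with the coordinate-by-coordinate induction as the usual alternative presentation), so there is no substantive difference to compare---you have filled in what the paper chose to quote.
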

Combining the above calculations with Proposition (\ref{prop:Contraction}), we see that
\[ E \left( \| D - E(D) \| \right) \leq 2  E_{\epsilon}  \|  \sum_{k=1}^n \epsilon_{k} T_k  \| .\]
It remains to find an upper bound for $E \| \sum_{k=1}^n \epsilon_k T_k \|$. 
 Let $Z = \sum_{k=1}^n \epsilon_k T_k.$  
 The operator norm of $Z$ is bounded by the Schatten-class norm,  
  $\| Z \|_{\mathcal{C}_{p}} = ( \sum_{j=1}^{N} | \sigma_{j}(Z) |^{p})^{1/p}$.  Additional information on the structure of matrices $T_k$ will allow us to use Theorem $\ref{thm:Khintchine}$ to bound the norm of $Z$. \\
 
 Suppose each matrix $T_k$  satisfies $T_k^{\ast} T_k = \frac{1}{n} I_n$, where $I_n$ is the identity on $\mathbb{R}^n$. (See Remark \ref{rem:final_remark})
 Then $ \sum_{k=1}^n T_k^{\ast} T_k  = I_n $
 and hence
 \begin{equation}\label{eqn:Tk}
  \left\| \left( \sum_{k=1}^n T_k^{\ast}T_k \right)^{1/2} \right\|_{\mathcal{C}^p}^{p}   = n.
 \end{equation}
 Using equation (\ref{eqn:Tk}), together with  Theorem \ref{thm:Khintchine}, we obtain 
 \[ E \left( \| Z \|^{2m} \right) \leq E \left( \| Z \|_{\mathcal{C}_{2m}}^{2m} \right) \leq 2^{2m} \cdot \frac{(2m)!}{2^m \ m!} \cdot n \]
 for a large $m$.  By Stirling approximation,
 \[ \frac{(2m)!}{2^m \ m!} \leq \sqrt{2} \left( \frac{2}{e} \right)^{m} \cdot m^m. \]
 Combining the above estimates, we see that
 \[ E \left(\| Z \|^{2m} \right) \leq 2^{2m} \sqrt{2} \left( \frac{2}{e} \right)^{m} \cdot m^m \cdot n. \]
 Since $(E \| Z \|)^{2m} \leq  E \left( \| Z \|^{2m} \right)$, this implies there exists a constant $\alpha_0$, such that
 \[ E \| Z \| \leq \alpha_0 \sqrt{ m} \ n^{1/(2m)}.\]
 Finally, by choosing $2m = \log n$,  we conclude that
 \[ E \| Z \| \leq \alpha_0 \ e \ \sqrt{ \log n}. \]
 Thus, we have proven:
 
\begin{thm}
Let $x_1, x_2, \ldots , x_n$ be independent and identically distributed random variables, where each $| x_j | \leq 1$.  
Let $T_1, T_2, \ldots, T_n$ be a fixed set of $n$ by $n$ matrices.
Let $D = \sum_{k=1}^n x_k T_k$ be a random matrix.
Suppose each matrix $T_k$  satisfies $T_k^{\ast} T_k = \frac{1}{n} I_n$, where $I_n$ is the identity on $\mathbb{R}^n$ .
There exists a constant $\alpha_1$ such that
\[ E \left( \| D - E(D) \| \right) \leq \alpha_1  \sqrt{ \log n}. \]
 \end{thm}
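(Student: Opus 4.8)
The plan is to follow the symmetrization/operator-Khintchine route assembled in the discussion above, in four steps: reduce to a Bernoulli sum, strip the $x_j$'s via the Contraction Principle, apply Theorem \ref{thm:Khintchine} through a Schatten norm, and optimize over the exponent.

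First I would reduce $D - E(D)$ to a sum of independently signed matrices. Introduce an independent copy $\widetilde x = (\widetilde x_1,\dots,\widetilde x_n)$ of $x$, so that $E(D) = E_{\widetilde x}\bigl(\sum_j \widetilde x_j T_j\bigr)$. Applying Jensen's inequality to the convex function $\|\cdot\|$ gives
\[
E\bigl(\|D - E(D)\|\bigr) \le E_x E_{\widetilde x}\,\Bigl\|\sum_{j=1}^n (x_j - \widetilde x_j)\,T_j\Bigr\|.
\]
Since each $x_j - \widetilde x_j$ is symmetric it has the same law as $\epsilon_j(x_j - \widetilde x_j)$ with $\{\epsilon_j\}$ an independent Bernoulli sequence, so the right-hand side equals $E_x E_{\widetilde x} E_\epsilon\bigl\|\sum_j \epsilon_j(x_j - \widetilde x_j) T_j\bigr\|$; splitting $x_j - \widetilde x_j$ by the triangle inequality bounds this by $2\,E_x E_\epsilon\bigl\|\sum_j \epsilon_j x_j T_j\bigr\|$. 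Because $|x_j|\le 1$, the Contraction Principle (Proposition \ref{prop:Contraction}) with $b=1$ removes the $x_j$'s, leaving $E(\|D - E(D)\|) \le 2\,E_\epsilon\|Z\|$ where $Z = \sum_{k=1}^n \epsilon_k T_k$.

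Next I would bound $E_\epsilon\|Z\|$ by passing to a Schatten norm and invoking the operator Khintchine inequality. For any positive integer $m$, $\|Z\| \le \|Z\|_{\mathcal{C}_{2m}}$. The hypothesis $T_k^\ast T_k = \tfrac1n I_n$ gives $\sum_{k=1}^n T_k^\ast T_k = I_n$; since the $T_k$ are square, $T_k^\ast T_k = \tfrac1n I_n$ forces $T_k T_k^\ast = \tfrac1n I_n$, hence $\sum_k T_k T_k^\ast = I_n$ as well. Thus both quantities in the maximum on the right of Theorem \ref{thm:Khintchine} equal $\|I_n\|_{\mathcal{C}_{2m}} = n^{1/(2m)}$, and raising that inequality to the $2m$-th power yields
\[
E\bigl(\|Z\|^{2m}\bigr) \le E\bigl(\|Z\|_{\mathcal{C}_{2m}}^{2m}\bigr) \le 2^{2m}\,\frac{(2m)!}{2^m\,m!}\,n.
\]
Applying the Stirling bound $\frac{(2m)!}{2^m m!} \le \sqrt2\,(2/e)^m m^m$ and taking a $2m$-th root (using $(E\|Z\|)^{2m}\le E\|Z\|^{2m}$) gives $E\|Z\| \le \alpha_0\,\sqrt m\, n^{1/(2m)}$ for an absolute constant $\alpha_0$. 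Choosing $2m$ to be an integer close to $\log n$ makes $n^{1/(2m)}$ bounded by a constant while $\sqrt m = O(\sqrt{\log n})$, so $E\|Z\| \le \alpha_0'\sqrt{\log n}$, and therefore $E(\|D - E(D)\|) \le 2\alpha_0'\sqrt{\log n} =: \alpha_1\sqrt{\log n}$.

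The Stirling estimate and the arithmetic of the final optimization are routine. The point needing care is the symmetrization: one must check that the symmetrized variable $x_j - \widetilde x_j$ may be multiplied by an independent $\epsilon_j$ without changing the law of the matrix sum — this uses that the $(x_j - \widetilde x_j)$ are independent and symmetric, so conditioning on $\{\epsilon_j\}$ is legitimate — and that the Contraction Principle then applies with $b=1$. A secondary subtlety is the observation that for square matrices $T_k^\ast T_k = \tfrac1n I_n$ is equivalent to $T_k T_k^\ast = \tfrac1n I_n$; without it only one of the two terms in Theorem \ref{thm:Khintchine} would be under control. Neither is a genuine obstacle, so the main work is simply chaining the estimates in the correct order.
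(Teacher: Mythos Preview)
Your proposal is correct and follows essentially the same route as the paper: symmetrization via an independent copy and Jensen, stripping the $x_j$'s with the Contraction Principle, bounding $E\|Z\|^{2m}$ through the Schatten norm and Theorem~\ref{thm:Khintchine}, then the same Stirling estimate and choice $2m\approx\log n$. Your treatment is in fact slightly more careful than the paper's, since you explicitly verify that $T_k^\ast T_k=\tfrac{1}{n}I_n$ for square $T_k$ forces $T_kT_k^\ast=\tfrac{1}{n}I_n$ as well, so that both terms in the maximum of Theorem~\ref{thm:Khintchine} are controlled; the paper only checks one.
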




 



\begin{rem}\label{rem:final_remark}
The assumption that $T_k^{\ast} T_k = \frac{1}{n} I_n$ means the columns of $T_k$ are orthogonal. Since matrix $A$ satisfies $A = \sum_{j=1}^n \lambda_j U_j$, it means that for each $k$, column $k$ of $A$ can be expressed as $\sum_{j=1}^n \lambda_j u_k^{(j)}$, where $u_k^{(j)}$ is the column $k$ of matrix $U_j$.  If $u_k^{(j)}$ and $u_k^{(h)}$ are orthogonal for all $j \neq h$, then  the columns of matrix $T_k$ are orthogonal.  
\end{rem}





\section*{Acknowledgements}
The reviewer did a very careful reading of the manuscript and his comments have led to a great improvement of the article.
The  author acknowledges the financial support of a post-doctoral fellowship from the Pacific Institute for the Mathematical Sciences. The author is
grateful to John Benedetto for teaching him the subject of harmonic analysis and the theory of frames.  The author wishes to thank \"{O}zg\"{u}r Yilmaz for fruitful discussion about random matrices.


\bibliographystyle{amsplain}
\bibliography{2013RMbib}

\end{document}